\tikzstyle{w_vertex}=[circle,fill=black!100,text=white,inner sep=0.4mm,draw]
\tikzstyle{vertex}=[circle,fill=black!100,text=white,inner sep=0.8mm]
\tikzstyle{point}=[circle,fill=black,inner sep=0.1mm]
\theoremstyle{plain}
\newtheorem{theorem}{Theorem}
\newtheorem{lemma}{Lemma}
\newtheorem{claim}{Claim}
\theoremstyle{definition}
\newtheorem{problem}{Problem}
\theoremstyle{remark}
\date{}
\title{Tree-width dichotomy}
\author{Vadim Lozin\thanks{Mathematics Institute, University of Warwick, Coventry, CV4 7AL, UK. Email:  V.Lozin@warwick.ac.uk} \and 
Igor Razgon \thanks{Department of Computer Science and Information Systems Birkbeck University of London. Email: igor@dcs.bbk.ac.uk}}
\begin{document}
\maketitle

\newtheorem{obs}{Observation}

\newtheorem{prop}{Proposition}
\newtheorem{cor}{Corollary}

\def\N{\mathbb{N}}
\def\t{\sim}
\def\nt{\nsim}
\def\1{n+1}
\def\2{n+2}

\begin{abstract}
We prove that the tree-width of graphs in a hereditary class defined by a finite set $F$ of forbidden induced subgraphs is bounded 
if and only if $F$ includes a complete graph, a complete bipartite graph, a tripod (a forest in which every connected component 
has at most 3 leaves) and the line graph of a tripod.  
\end{abstract}

{\it Keywords}: Tree-width; Hereditary class; Boundary class


\section{Introduction}


A class of graphs, also known as a graph property, is a set of graphs closed under isomorphism.
A property is {\it hereditary} if it is closed under taking induced subgraphs. 
The world of hereditary properties is rich and diverse, and it contains various classes of theoretical or practical importance, such as perfect graphs, interval graphs,
permutation graphs, bipartite graphs, planar graphs, threshold graphs, split graphs, graphs of bounded vertex degree, graphs of bounded tree-width, etc. 
It also contains all classes closed under taking subgraphs, minors, induced minors, vertex-minors, etc.

Tree-width is a graph parameter, which is important in algorithmic graph theory, because many problems that are NP-hard for general graphs
become polynomial-time solvable for graphs of bounded tree-width. Many hereditary classes have been shown to be of bounded tree-width, 
such as outerplanar graphs or graphs of bounded vertex degree that also have bounded chordality. For many other classes, 
tree-width has been shown to be unbounded, which includes planar graphs or graphs of vertex degree at most 3. In the present paper, 
we characterize the family of hereditary classes of bounded tree-width by means of four critical properties. To better explain our approach, 
let us illustrate it with the following example. 

The bottom of the hierarchy of hereditary properties consists of classes containing finitely many graphs,
i.e., classes where the number of vertices is bounded. According to Ramsey's Theorem,
there exist precisely two minimal hereditary classes that are infinite: complete graphs and edgeless graphs. 
In other words, in the universe of hereditary classes the complete graphs and the edgeless graphs are minimal obstructions 
(minimal forbidden elements) for the family of classes of bounded vertex number.

The idea of minimal obstructions for {\it families of classes} generalizes 
the idea of minimal forbidden induced subgraphs for individual hereditary classes. This idea always works in a well-quasi-ordered world.
For instance, the graph minor relation is known to be a well-quasi-order \cite{minor-wqo},
and in the family of minor-closed classes of graphs the planar graphs constitute a unique minimal obstruction for classes of bounded tree-width \cite{planar}.
However, the induced subgraph relation is not a well-quasi-order and in the universe of hereditary properties minimal classes outside of a particular family may not exist, 
in  which case we employ the notion of boundary classes. This is a relaxation of the notion of minimal classes and it is defined as follows.

Let $\cal A$ be a family of hereditary classes closed under taking subclasses, and let $X_1\supseteq X_2\supseteq X_3\supseteq\ldots$
be a chain of classes not in $\cal A$. The intersection of all classes in this chain is called a {\it limit} class and a minimal limit class 
is called a {\it boundary} class for $\cal A$. The importance of this notion is due to the fact that it plays the role of minimal obstructions for hereditary classes 
defined by finitely many forbidden induced subgraphs (finitely defined classes, for short). In other words, a finitely defined class $X$ belongs to $\cal A$ if and only if
$X$ contains none of the boundary classes for $\cal A$.

The notion of boundary classes was formally introduced by Alekseev in \cite{Alekseev}, where he proved basic results related to this notion. 
However, implicitly this notion appeared earlier. For instance, if $\cal A$ is the family of hereditary classes of bounded chromatic number,
then in the terminology of limit and boundary classes the famous result of Erd\H{o}s \cite{Erdos} states that 
the class of forests is a limit class for $\cal A$, while the {G}y\'{a}rf\'{a}s-{S}umner conjecture \cite{Sumner}
claims that this is a minimal limit, i.e. boundary, class for $\cal A$. Notice that for this family there is one more obstruction, the class of complete 
graphs, which is a minimal hereditary class of unbounded chromatic number.

Together, minimal and boundary classes are known as critical properties. In the present paper, we show that for the family of hereditary classes of bounded tree-width
there are exactly four critical properties: two minimal classes (complete graphs and complete bipartite graphs) and two boundary classes (tripods, i.e. 
forests in which every connected component has at most 3 leaves, and the line graphs of tripods). 
In other words, we show that the tree-width of graphs in a finitely defined class $X$ is bounded if and only if $X$ excludes a graph from each of the four critical classes. 
The `only if' part follows from known results that we report in Section~\ref{sec:pre} together with some other information related to the topic of the paper. 
The `if' part is proved in Section~\ref{sec:bounded}.
Section~\ref{sec:discussion} concludes the paper with a discussion.


\section{Preliminaries}
\label{sec:pre}

All graphs in this paper are simple, i.e. finite, undirected, without loops and multiple edges. 
The vertex set and the edge set of a graph $G$ are denoted $V(G)$ and $E(G)$, respectively. 
By $K_n$ we denote the complete graph on $n$ vertices and by $K_{n,m}$ the complete bipartite graph with parts of size $n$ and $m$.

A graph $H$ is an induced subgraph of a graph $G$ if $H$ can be obtained from $G$ by deleting some vertices.
The induced subgraph of $G$ obtained by deleting the vertices outside of a set $U\subseteq V(G)$ is denoted $G[U]$.
If $G$ does not contain $H$ as an induced subgraph, then we say that $G$ is $H$-free.

A subdivision of an edge is the operation of introducing a new vertex on the edge. A graph $H$ is a subdivision 
of a graph $G$ if $H$ is obtained from $G$ by subdividing its edges. $H$ is a $(\le k)$-subdivision of $G$
if $H$ is obtained from $G$ by subdividing each edge of $G$ at most $k$ times.
$H$ is a proper subdivision of $G$ if $H$ is obtained from $G$ by subdividing each edge of $G$ at least once.

A clique in a graph is a subset of pairwise adjacent vertices and an independent set is a subset of pairwise non-adjacent vertices.
The Ramsey number $R(p,q)$ is the minimum $n$ such that every graph with at least $n$ vertices has either an independent set of size $p$ 
or a clique of size $q$.  


\subsection{Tree decomposition}
\label{sec:tree-width}

Let $G$ be a graph,  $T$ be a tree, and  ${\cal V}=(V_t)_{t\in T}$ be a family of 
vertex sets $V_t\subseteq V(G)$, called {\it bags}, indexed by the nodes $t$ of $T$. 
The pair $(T,\cal V)$ is called a {\it tree decomposition} of $G$ if  
\begin{itemize}
\item $V(G)=\cup_{t\in T}V_t$,
\item for every edge $e$ of $G$, there exists a bag $V_t$ containing both endpoints of $e$,
\item for any three nodes $t_1,t_2,t_3$ of $T$ such that $t_2$ lies on the unique path, 
connecting $t_1$ to $t_3$ in $T$, we have $V_{t_1}\cap V_{t_3}\subseteq V_{t_2}$. 
\end{itemize}
The {\it width} of a tree decomposition is the size of its largest bag minus one. The {\it tree-width} of 
$G$ is the minimum width among all possible tree decompositions of $G$. 

Let $(T,\cal V)$ be a tree decomposition of $G$ and $xy$ an edge in $T$.
We denote by $T_x$ and $T_y$ the connected components of $T-xy$ containing $x$ and $y$, respectively.
Also, let $U_x$ be the union of the bags corresponding to the nodes of $T_x$, and 
let $U_y$ be the union of the bags corresponding to the nodes of $T_y$. 
The set $Z_{xy}:=U_x\cap U_y=V_x\cap V_y$ is a separator in $G$, i.e. there are no edges between 
$U_x-Z$ and $U_y-Z$. The decomposition ${\cal V}=(V_t)_{t\in T}$ is {\it tight} if 
for any edge $xy\in E(T)$ and any two vertices $u,v\in Z_{xy}$, both $G[U_x]$ and $G[U_y]$ 
contain a $u$-$v$-path with no internal vertices in $Z_{xy}$.

For a node $x$ of $T$, the {\it torso} at $x$ is the graph obtained from $G[V_x]$ by creating a clique in $V_x\cap V_y$ 
for every edge $xy\in E(T)$.

A $b$-{\it block} in a graph $G$ is a maximal subset of at least $b$ vertices no two of which can be separated in $G$
by deleting fewer than $b$ vertices.  The maximum $b$ such that $G$ contains a $b$-block is the {\it block number} of $G$.

\begin{theorem}\label{thm:block}{\rm \cite{block}}
Let $k\ge 3$ be a positive integer and $G$ a graph. If $G$ has no $k$-block, then there is a tight tree-decomposition $(T,\cal V)$
of $G$ such that every torso has fewer than $k$ vertices of degree at least $2(k-1)(k-2)$.
\end{theorem}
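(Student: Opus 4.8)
The plan is to produce, from the hypothesis that $G$ has no $k$-block, a tight tree-decomposition all of whose separators are small, and then to read off the degree bound from the local structure of the torsos. First I would construct a tree-decomposition $(T,\mathcal V)$ of $G$ of adhesion at most $k-1$, that is, with $|Z_{xy}|\le k-1$ for every edge $xy\in E(T)$. The absence of a $k$-block means that no set of $k$ vertices is pairwise inseparable by fewer than $k$ vertices, so starting from the trivial decomposition with the single bag $V(G)$, as long as some torso contains two vertices that can be separated by a set $S$ of size at most $k-1$, I would split the corresponding node along $S$; the missing $k$-block guarantees that this refinement continues until every piece is highly connected only on sets of size $<k$. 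I would then make the decomposition tight by a minimality argument: among all adhesion-$(\le k-1)$ tree-decompositions take one minimizing, say, the number of bags and then the total bag size, and observe that if some pair $u,v\in Z_{xy}$ had no $u$-$v$-path internally avoiding $Z_{xy}$ on one side, then $Z_{xy}$ could be shrunk or that tree-edge contracted, contradicting minimality.

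The heart of the argument is the following claim: in a tight decomposition of adhesion $\le k-1$, any two vertices $u,v$ of a single torso $H_x$ with $\deg_{H_x}(u),\deg_{H_x}(v)\ge 2(k-1)(k-2)$ cannot be separated in $G$ by fewer than $k$ vertices. Granting the claim, if some torso $H_x$ had at least $k$ vertices of degree $\ge 2(k-1)(k-2)$, these vertices would be pairwise inseparable in $G$ by fewer than $k$ vertices; such a $k$-set is then contained in a maximal set with this property, which has size $\ge k$ and is by definition a $k$-block, contradicting the hypothesis. Hence every torso has fewer than $k$ such vertices, which is exactly the desired conclusion.

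To prove the claim I would appeal to Menger's theorem, so that it suffices to exhibit $k$ internally vertex-disjoint $u$-$v$-paths in $G$. The mechanism is that tightness converts torso-adjacency into genuine connectivity in $G$: each edge of $H_x$ is either a true edge of $G$ inside $V_x$, or a fill-in edge arising from a separator $Z_{xy}$, and in the latter case tightness supplies, on the far side $U_y$, a $u$-$w$-path whose internal vertices lie in $U_y\setminus V_x$. The crucial structural point is that the external regions $U_y\setminus V_x$ attached at distinct tree-neighbours $y$ of $x$ are pairwise disjoint and disjoint from $V_x$, so paths routed through different separators do not interfere. Using the large torso-degrees of $u$ and $v$, I would first build many internally disjoint $u$-$v$-paths inside $H_x$ and then lift them to $G$ through these disjoint external regions.

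The main obstacle, and the source of the bound $2(k-1)(k-2)$, is the bookkeeping when several of these paths must traverse the same adhesion set $Z_{xy}$ of size at most $k-1$: their replacement paths then live in the same region $G[U_y]$ and may collide, and moreover up to $k-1$ of the torso-neighbours of $u$ or of $v$ may be swallowed by a hypothetical separator $S$ with $|S|\le k-1$. The threshold $2(k-1)(k-2)$ is calibrated precisely to absorb both losses: from $2(k-1)(k-2)$ neighbours one can discard those meeting $S$ and still reroute enough paths that at least $k$ survive internally disjoint, contradicting the assumption that $S$ separates $u$ from $v$. Making this rerouting quantitative, so that the collisions within each separator of size $\le k-1$ cost no more than the allotted factor, is the step I expect to require the most care.
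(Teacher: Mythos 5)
The paper does not prove this theorem at all: it is imported verbatim from Wei\ss auer's paper \cite{block} and used as a black box, so there is no internal proof to compare your attempt against. Judged on its own, your proposal has the right overall architecture (build a suitably refined tight decomposition, show that two high-degree torso vertices are inseparable by fewer than $k$ vertices, and conclude that $k$ of them would sit inside a $k$-block), but it contains a genuine gap at its centre.

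The key claim as you state it --- that in \emph{any} tight tree-decomposition of adhesion at most $k-1$, two vertices of a common torso with torso-degree at least $2(k-1)(k-2)$ cannot be separated in $G$ by fewer than $k$ vertices --- is false. Take two large stars with centres $u$ and $v$ joined by a path $u$--$w$--$v$: the trivial one-bag decomposition is tight and vacuously of small adhesion, its unique torso is $G$ itself, both $u$ and $v$ have arbitrarily large degree, yet $\{w\}$ separates them. What the argument actually needs is that the decomposition be \emph{maximally refined}, i.e.\ arise from a maximal nested family of separations of order less than $k$, so that any small separator of two torso vertices would already have been used to split that torso; tightness alone does not encode this. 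Your iterative splitting procedure is implicitly meant to supply exactly this property, but you neither make it precise (the chosen separators must be pairwise nested to be realizable as adhesion sets of one tree-decomposition, and termination must be argued) nor invoke it where it is needed in the claim. Separately, the quantitative heart of the theorem --- the rerouting count showing that degree $2(k-1)(k-2)$ survives the losses caused by a separator of size at most $k-1$ meeting the bag and the external regions --- is exactly the step you defer, so the specific bound in the statement is never actually derived. As it stands the proposal is a plausible plan whose two hardest steps (the correct hypothesis on the decomposition, and the counting) are missing.
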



\subsection{Critical classes for bounded tree-width}

It is well-known that complete graphs and complete bipartite graphs can have arbitrarily large tree-width
and it is not difficult to see that these are {\it minimal} hereditary classes of unbounded tree-width.
Moreover, tree-width remains unbounded even if we forbid a complete graph $K_p$ and a complete bipartite graph $K_{s,s}$
as an induced subgraph, because for any $p\ge 3$ and $s\ge 4$ this includes all graphs of vertex degree at most $3$,
which are known to be of unbounded tree-width. We observe that by forbidding $K_p$ and $K_{s,s}$ as induced subgraphs 
we forbid $K_{t,t}$ as a subgraph for any $t\ge R(s,p)$. We will refer to these graphs as graphs with no    
$K_{t,t}$-subgraph, or simply as graphs of bounded biclique number.

In the universe of hereditary classes of graphs of bounded biclique number minimal classes of unbounded tree-width do not exist
and hence we look for boundary classes that are not minimal. Two of them have been identified in \cite{boundary-CPC}. These are 
\begin{itemize}
\item[$\cal S$] the class of graphs every connected component of which has the form $S_{i,j,k}$ represented in Figure~\ref{fig:ST} (left),
\item[$\cal T$] the class of graphs every connected component of which has the form $T_{i,j,k}$ represented in Figure~\ref{fig:ST} (right).
\end{itemize}

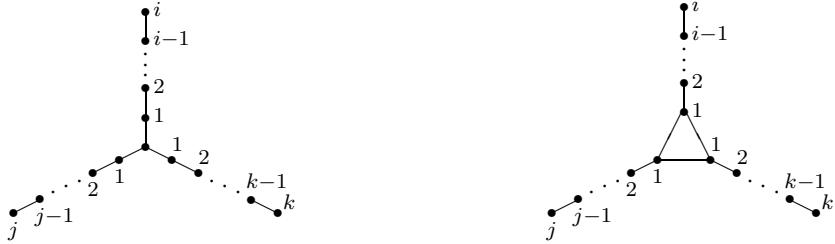
\begin{figure}[ht]
\begin{center} \begin{picture}(200,80)
\put(110,35){\circle*{3}}
\put(110,46){\circle*{3}}
\put(110,57){\circle*{3}}
\put(110,75){\circle*{3}}
\put(110,86){\circle*{3}}
\put(110,62){\circle*{1}}
\put(110,66){\circle*{1}}
\put(110,70){\circle*{1}}
\put(110,35){\line(0,1){11}}
\put(110,46){\line(0,1){11}}
\put(110,75){\line(0,1){11}}
\put(100,30){\circle*{3}}
\put(90,25){\circle*{3}}
\put(70,15){\circle*{3}}
\put(60,10){\circle*{3}}
\put(85,22){\circle*{1}}
\put(80,20){\circle*{1}}
\put(75,18){\circle*{1}}
\put(110,35){\line(-2,-1){10}}
\put(100,30){\line(-2,-1){10}}
\put(70,15){\line(-2,-1){10}}
\put(120,30){\circle*{3}}
\put(130,25){\circle*{3}}
\put(150,15){\circle*{3}}
\put(160,10){\circle*{3}}
\put(135,22){\circle*{1}}
\put(140,20){\circle*{1}}
\put(145,18){\circle*{1}}
\put(110,35){\line(2,-1){10}}
\put(120,30){\line(2,-1){10}}
\put(150,15){\line(2,-1){10}}
\put(113,46){$_1$}
\put(113,57){$_2$}
\put(113,75){$_{i-1}$}
\put(113,86){$_i$}
\put(98,23){$_1$}
\put(88,18){$_2$}
\put(68,8){$_{j-1}$}
\put(58,3){$_j$}
\put(120,35){$_1$}
\put(130,30){$_2$}
\put(148,21){$_{k-1}$}
\put(162,13){$_k$}
\end{picture}
\begin{picture}(200,80)
\put(110,48){\circle*{3}}
\put(110,59){\circle*{3}}
\put(110,77){\circle*{3}}
\put(110,88){\circle*{3}}
\put(110,64){\circle*{1}}
\put(110,68){\circle*{1}}
\put(110,72){\circle*{1}}
\put(120,30){\line(-1,2){9}}
\put(110,48){\line(0,1){11}}
\put(110,77){\line(0,1){11}}
\put(100,30){\circle*{3}}
\put(90,25){\circle*{3}}
\put(70,15){\circle*{3}}
\put(60,10){\circle*{3}}
\put(85,22){\circle*{1}}
\put(80,20){\circle*{1}}
\put(75,18){\circle*{1}}
\put(100,30){\line(1,0){20}}
\put(100,30){\line(-2,-1){10}}
\put(70,15){\line(-2,-1){10}}
\put(120,30){\circle*{3}}
\put(130,25){\circle*{3}}
\put(150,15){\circle*{3}}
\put(160,10){\circle*{3}}
\put(135,22){\circle*{1}}
\put(140,20){\circle*{1}}
\put(145,18){\circle*{1}}
\put(100,30){\line(1,2){9}}
\put(120,30){\line(2,-1){10}}
\put(150,15){\line(2,-1){10}}
\put(113,48){$_1$}
\put(113,59){$_2$}
\put(113,77){$_{i-1}$}
\put(113,88){$_i$}
\put(98,23){$_1$}
\put(88,18){$_2$}
\put(68,8){$_{j-1}$}
\put(58,3){$_j$}
\put(120,35){$_1$}
\put(130,30){$_2$}
\put(148,21){$_{k-1}$}
\put(162,13){$_k$}
\end{picture} 
\end{center}
\caption{The graphs $S_{i,j,k}$ (left) and  $T_{i,j,k}$ (right) }
\label{fig:ST}
\end{figure}

We call graphs in $\cal S$ the {\it tripods}. It is not difficult to see that graphs in $\cal T$ are the line graphs of tripods.
We repeat that boundary classes play the role of minimal obstructions for classes defined by finitely many forbidden induced subgraphs, 
which implies the following conclusion.

\begin{lemma}\label{lem:boundary}{\rm \cite{boundary-CPC}}
If $X$ is a finitely defined class that contains $\cal S$ or $\cal T$, then the tree-width of graphs in $X$ is unbounded.  
\end{lemma}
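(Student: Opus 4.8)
The plan is to deduce the statement from the general principle, recalled in the introduction, that a finitely defined class belongs to a subclass-closed family ${\cal A}$ if and only if it contains none of the boundary classes for ${\cal A}$. Here ${\cal A}$ is the family of hereditary classes of bounded tree-width, which is indeed closed under taking subclasses, since a subclass of a class of bounded tree-width is again of bounded tree-width. By \cite{boundary-CPC}, the classes ${\cal S}$ and ${\cal T}$ are boundary classes for ${\cal A}$; in particular each of them is a \emph{limit} class, meaning it can be written as $\bigcap_{j\ge 1}X_j$ for some descending chain $X_1\supseteq X_2\supseteq\cdots$ of hereditary classes of unbounded tree-width. It is only this limit property that I would use.

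So fix $Y\in\{{\cal S},{\cal T}\}$, write $Y=\bigcap_j X_j$ as above, and let $X$ be a finitely defined class with $Y\subseteq X$; say $X=\mathrm{Free}(H_1,\dots,H_m)$, the class of graphs containing no $H_i$ as an induced subgraph. First I would observe that no $H_i$ lies in $Y$: otherwise $H_i\in Y\subseteq X$, yet $H_i$ trivially contains itself as an induced subgraph and so cannot be $H_i$-free, a contradiction. Since $Y=\bigcap_j X_j$, for each $i$ there is an index $j_i$ with $H_i\notin X_{j_i}$. Put $J=\max_i j_i$. The chain is descending, so $X_J\subseteq X_{j_i}$ and hence $H_i\notin X_J$ for every $i$. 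As $X_J$ is hereditary, containing none of $H_1,\dots,H_m$ is the same as being $\{H_1,\dots,H_m\}$-free, so $X_J\subseteq X$. Because $X_J$ has unbounded tree-width and $X_J\subseteq X$, the class $X$ has unbounded tree-width as well. Applying this with $Y={\cal S}$ and with $Y={\cal T}$ gives the lemma.

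The two routine ingredients above --- the monotonicity of tree-width under subclasses and the ``maximum index'' trick turning finitely many exclusions into a single class in the chain --- are easy. The real content, which I would import from \cite{boundary-CPC} rather than reprove, is the hard part: verifying that ${\cal S}$ and ${\cal T}$ really are limit classes, i.e. exhibiting explicit descending chains of hereditary classes of unbounded tree-width whose intersections are exactly the tripods and the line graphs of tripods. This is delicate because the members of the chain must simultaneously carry arbitrarily large tree-width and, in the limit, shed every graph that is not a tripod (respectively, not a line graph of a tripod). A naive attempt --- say, taking subcubic graphs of ever-increasing girth --- excludes all graphs with short induced cycles and all graphs with a vertex of degree at least $4$, but it fails to exclude non-tripod forests of maximum degree $3$ (for instance a path with two extra pendant vertices attached at interior nodes, which has four leaves): such a forest is acyclic and subcubic, hence it belongs to every class of the chain and survives in the intersection. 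Engineering chains that avoid exactly the right graphs is the crux, and it is precisely what \cite{boundary-CPC} provides.
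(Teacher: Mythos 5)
Your argument is correct and follows exactly the route the paper intends: the paper states this lemma with only a citation to \cite{boundary-CPC} together with the general principle that boundary (hence limit) classes obstruct finitely defined classes, and your ``maximum index'' derivation is precisely the standard proof of that principle, with the only non-routine input being the fact, imported from \cite{boundary-CPC}, that ${\cal S}$ and ${\cal T}$ are limit classes for bounded tree-width.
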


In \cite{boundary-CPC}, it was shown that $\cal S$ and $\cal T$ are the {\it only} boundary classes in the universe of planar graphs,
i.e. in any hereditary class of planar graphs excluding a graph from $\cal S$ and a graph from $\cal T$ the tree-width is bounded.  
Also, in \cite{degree}, it was shown that the same is true for the universe of hereditary classes of bounded vertex degree.

\begin{theorem}\label{thm:degree}{\rm \cite{degree}}
For every positive integer $d$, every graph $G_1\in \cal S$ and every graph $G_2\in \cal T$, there is 
a positive integer $t=t(d,G_1,G_2)$ such that all $(G_1,G_2)$-free graphs of vertex degree at most $d$ 
have tree-width at most $t$.
\end{theorem}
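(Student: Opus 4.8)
The plan is to establish the contrapositive: for a suitable threshold $t=t(d,G_1,G_2)$, every graph $G$ of maximum degree at most $d$ and tree-width greater than $t$ contains $G_1$ or $G_2$ as an induced subgraph. Since $G_1\in{\cal S}$ is a disjoint union of spiders and $G_2\in{\cal T}$ a disjoint union of graphs $T_{i,j,k}$, it is enough to find in $G$ either $m$ pairwise disjoint induced copies of one long spider $S_{\ell,\ell,\ell}$, or $m$ pairwise disjoint induced copies of one long $T_{\ell,\ell,\ell}$, where $m$ and $\ell$ depend only on $G_1$ and $G_2$; truncating legs, any such collection contains the corresponding forbidden graph.

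My first step would be to isolate the dense part of $G$ using Theorem~\ref{thm:block}. As $G$ has maximum degree at most $d$, it has no $(d+2)$-block: two adjacent vertices can never be separated, so a $(d+2)$-block would be forced to be a clique and hence impossible at maximum degree $d$, while two non-adjacent vertices that no $d+1$ vertices separate are joined by $d+2$ internally disjoint paths and so have degree at least $d+2$. Theorem~\ref{thm:block} with $k=\max(d+2,3)$ thus gives a tight tree-decomposition whose torsos each contain fewer than $k$ vertices of degree at least $2(k-1)(k-2)$. Because every separator becomes a clique in the incident torso, this degree bound forces all separators to be of bounded size, so $G$ splits along bounded separators and its tree-width is controlled by the tree-widths of the torsos; deleting from a torso its fewer than $k$ high-degree vertices costs only a bounded additive term and leaves a graph of bounded maximum degree, whose sole non-genuine edges form bounded cliques on the bounded separators. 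It therefore suffices to bound the tree-width of such a piece.

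The engine on a piece is the excluded wall. If a piece had large tree-width, the Grid Minor Theorem would yield a large wall minor, and since a wall is subcubic and the piece has bounded degree, the minor is realized as a topological minor: a subgraph $W'$ that is a subdivision of a large wall. I would then analyse the induced graph on $V(W')$ through its chords, the edges joining vertices non-adjacent along $W'$; bounded degree caps the number of chords at each vertex, and the bounded separator cliques, having no induced path on three vertices, perturb the structure only locally, so that a long chordless portion of $W'$ corresponds to a genuine induced subgraph of $G$. The dichotomy I aim to prove is that, after passing to a large sub-wall, either some branch vertex carries three chordless leg-segments of length $\ell$, giving an induced $S_{\ell,\ell,\ell}$, or the chords sit in a regular scheme that shortcuts the subdivision into triangles, giving a triangle with three chordless attached paths, i.e.\ an induced $T_{\ell,\ell,\ell}$. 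Taking the wall large in terms of $d$, $\ell$ and $m$ then delivers the required $m$ disjoint copies.

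The hard part will be precisely this extraction, namely guaranteeing that one of the two clean configurations must appear. To manage it I would fix the wall large enough and run a Ramsey-type cleanup on the bounded chord pattern, extracting a large homogeneous sub-wall on which the chords around every branch-neighbourhood obey a single scheme: a chord-free scheme exhibits the induced spider, whereas a scheme whose chords consistently close triangles along the legs exhibits the induced line graph of a spider. The most delicate points, at which the bounded-degree hypothesis and the exact shapes of $S_{i,j,k}$ and $T_{i,j,k}$ are essential, are excluding intermediate schemes that yield neither configuration and verifying that the triangle scheme produces an induced $T_{i,j,k}$ rather than a denser graph carrying additional chords.
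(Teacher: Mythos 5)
This statement is not proved in the paper at all: it is imported verbatim from \cite{degree}, and the authors only indicate that its proof there reduces the problem to graphs of bounded chordality and then invokes the result of \cite{chordality} that bounded chordality together with bounded degree implies bounded tree-width. So your proposal follows a genuinely different route (grid/wall minors rather than chordality), but judged on its own it is a programme rather than a proof, and the gap sits exactly where the content of the theorem lies. The step ``after a Ramsey-type cleanup, the chords on a large wall subdivision either vanish, giving an induced $S_{\ell,\ell,\ell}$, or consistently close triangles, giving an induced $T_{\ell,\ell,\ell}$'' is asserted, not established, and the dichotomy is not even plausibly exhaustive as stated: in a bounded-degree graph the chords on a subdivided wall may join vertices far apart on the same branch, or join distinct branches or distinct bricks, producing local patterns (four-cycles, long shortcuts, interleaved adjacencies between legs) that are neither chord-free nor triangle-closing. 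Showing that every such pattern still forces one of the two target configurations somewhere in the wall is the entire difficulty, and you explicitly defer it. Relatedly, ``chordless along $W'$'' is weaker than ``induced in $G$'': for the spider you need the three legs to be pairwise anticomplete and individually chord-free in $G$, which bounded degree alone does not provide without the cleanup you have not supplied.

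A secondary problem is the opening detour through Theorem~\ref{thm:block}, which for bounded-degree graphs is both unnecessary and mildly harmful. The torsos carry edges that are not edges of $G$ (the cliques placed on the adhesion sets), so an induced copy of $S_{\ell,\ell,\ell}$ or $T_{\ell,\ell,\ell}$ found inside a torso need not be induced in $G$; translating it back requires the tightness argument that this paper carries out in Lemma~\ref{lem:final}, and your proposal does not perform that translation. Since the hypothesis already bounds the degree of $G$ itself, you could run the wall analysis on $G$ directly and drop the block decomposition entirely; but even then the core extraction step above remains unproven.
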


This result for bounded degree graphs was obtained by reducing it to graphs of bounded chordality (the length of a longest chordless cycle), 
because bounded chordality together with bounded degree imply bounded tree-width \cite{chordality}. In \cite{absence}, the result for 
bounded chordality was extended from graphs of bounded degree to graphs of bounded biclique number by means of Theorem~\ref{thm:block}.
In the next section, we use a similar strategy to extend Theorem~\ref{thm:degree} from graphs of bounded degree to graphs of bounded biclique number.  


\section{Main result}
\label{sec:bounded}


We start with the following helpful lemma, which will be used repeatedly throughout the proof.

\begin{lemma}\label{lem:clique}
For all positive integers $a$ and $b$, there is a positive integer $C=C(a,b)$ such that if 
a graph $G$ contains a collection of $C$ pairwise disjoint subsets of vertices, each of size at most $a$ 
and with at least one edge between any two of them, then $G$ contains a $K_{b,b}$-subgraph.   
\end{lemma}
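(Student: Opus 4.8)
The plan is to reduce the statement to the multicolour version of Ramsey's theorem. I would label the given subsets $A_1,\dots,A_C$ and fix an arbitrary enumeration $A_i=\{v_i^1,\dots,v_i^{a_i}\}$ of each, where $a_i\le a$; it is convenient to think of the superscript as a \emph{position} drawn from $\{1,\dots,a\}$. For every pair $i<j$ there is, by hypothesis, at least one edge between $A_i$ and $A_j$; I would choose one such edge and record the pair of positions $(s,t)$ with $s,t\in\{1,\dots,a\}$ of its two endpoints, the first endpoint lying in $A_i$ and the second in $A_j$. This assigns to each edge $\{i,j\}$ of the complete graph on $\{1,\dots,C\}$ a colour from a palette of at most $a^2$ colours.

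Next I would apply the multicolour Ramsey theorem: choosing $C$ to be the Ramsey number that guarantees a monochromatic clique on $2b$ vertices whenever the edges of a complete graph are coloured with $a^2$ colours, there is an index set $I\subseteq\{1,\dots,C\}$ with $|I|=2b$ and fixed positions $s,t$ such that for every pair $i<j$ in $I$ the chosen edge joins $v_i^s$ to $v_j^t$. In particular, $v_i^s$ is adjacent to $v_j^t$ whenever $i<j$ in $I$.

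To extract the biclique, I would split $I$ into its $b$ smallest elements $L$ and its $b$ largest elements $R$. For every $i\in L$ and $j\in R$ we have $i<j$, hence $v_i^s$ is adjacent to $v_j^t$. The vertex sets $\{v_i^s:i\in L\}$ and $\{v_j^t:j\in R\}$ are therefore completely joined; since the subsets $A_i$ are pairwise disjoint and $L\cap R=\emptyset$, these two vertex sets are disjoint and each has exactly $b$ elements, so together they span a $K_{b,b}$-subgraph, as required.

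The argument is essentially bookkeeping once the right colouring is chosen; the only point that needs care is the \emph{orientation}, i.e.\ fixing a convention (via the ordering $i<j$) for which endpoint of the witnessing edge sits in which set, so that the monochromatic clique yields a one-sided adjacency pattern that survives the split into $L$ and $R$. No further estimates are needed: the bound $C=C(a,b)$ is simply the stated $a^2$-colour Ramsey number for a monochromatic $K_{2b}$.
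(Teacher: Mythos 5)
Your proof is correct, but it takes a genuinely different route from the paper's. You reduce the lemma to a single application of the multicolour Ramsey theorem: colouring each pair $\{i,j\}$ (with $i<j$) by the ordered pair of positions of the endpoints of a witnessing edge gives an $a^2$-colouring of $K_C$, a monochromatic $K_{2b}$ pins down one vertex per set, and the order-based split into $L$ and $R$ correctly handles the asymmetry of the colour (every $i\in L$ is the smaller index of each relevant pair, so the adjacency $v_i^s\sim v_j^t$ holds for all $i\in L$, $j\in R$, and disjointness of the $A_i$ makes the two sides disjoint sets of exactly $b$ vertices each). The paper instead avoids Ramsey's theorem entirely and uses the pigeonhole principle twice: it splits the family into two halves $\mathcal A$ and $\mathcal B$, colours the sets of $\mathcal B$ by a tuple of common neighbours inside $r$ chosen sets of $\mathcal A$ (at most $a^r$ colours), extracts a large colour class $B$, then colours the resulting $r$ representative vertices by their neighbour tuples inside $B$ (at most $a^b$ colours) to obtain the biclique. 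Your argument is shorter and arguably cleaner, at the cost of invoking multicolour Ramsey rather than only the pigeonhole principle; quantitatively your bound $C=R_{a^2}(K_{2b})$ is of a similar (in fact, for large $b$, better) order than the paper's doubly iterated pigeonhole constant. Either value of $C(a,b)$ is fine for the applications in the paper, which only need some constant depending on $a$ and $b$.
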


\begin{proof}
Let $P(r,m)$ denote the minimum $n$ such that in every
coloring of the elements of an $n$-set with $r$ colors there exists a subset of $m$ elements
of the same color (the pigeonhole principle).

We define $r:= P(a^b,b)$ and $C(a,b):= 2P(a^r,b)$ and assume $G$ contains a family of 
$C$ pairwise disjoint subsets of vertices, each of size at most $a$ and with at least one 
edge between any two of them. We split this family arbitrarily into two subfamilies  $\cal A$ and $\cal B$,
each of size $P(a^r,b)$ and consider an arbitrary
collection $A$ of $r$ sets from $\cal A$. Since each set in $\cal B$ has a neighbor in each set in $A$,
the family of the sets in $\cal B$ can be colored with at most $a^r$ colors so that all sets of
the same color have a common neighbor in each of the $r$ chosen sets of collection $A$. 
Since $|{\cal B}|=P(a^r,b)$, one of the color classes contains a collection $B$ of at
least $b$ sets. For each set in $A$, we choose a vertex which is a common neighbor for
all sets in $B$ and denote the set of $r$ chosen vertices by $U$. The vertices of $U$ can be
colored with at most $a^b$ colors so that all vertices of the same color have a common
neighbor in each of the $b$ sets of collection $B$. By the choice of $r$, $U$ contains a color
class $U_1$ of least $b$ vertices. For each set in $B$, we choose a vertex which is a common
neighbor for all vertices of $U_1$ and denote the set of $b$ chosen vertices by $U_2$. Then
$U_1$ and $U_2$ form a biclique $K_{b,b}$. 
\end{proof}

Before we proceed to graphs without tripods, we prove the following key result, which is of independent interest.  

\begin{theorem}  
\label{thm:bigclique}
For all positive integers $r$ and $p$, there is a positive integer $m=m(r,p)$
such that every graph $G$ containing a $(\le p)$-subdivision of $K_m$ as a subgraph 
contains either $K_{p,p}$ as a subgraph or a proper $(\leq p)$-subdivision of $K_{r,r}$ as an induced subgraph. 
\end{theorem}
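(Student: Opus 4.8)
The plan is to start from a $(\le p)$-subdivision $H$ of $K_m$ inside $G$, with branch vertex set $B$ ($|B|=m$) and, for each pair $\{u,v\}$, a connecting path $P_{uv}$ having at most $p$ internal vertices, and then to pass repeatedly to large subsets of $B$ on which the configuration becomes \emph{homogeneous}, using Ramsey's theorem. The guiding principle is a dichotomy: whenever the homogeneous behaviour forces many edges, I extract a $K_{p,p}$-subgraph via Lemma~\ref{lem:clique}; when no such obstruction arises, the surviving branch set will support a clean induced subdivision of $K_{r,r}$.

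First I would fix a linear order on $B$ and colour each pair $\{u,v\}$ by the number $s(u,v)\in\{0,1,\ldots,p\}$ of subdivision vertices on $P_{uv}$. By Ramsey's theorem, for $m$ large there is a large $B_1\subseteq B$ on which $s$ is constantly equal to some $q$. If $q=0$ then $G[B_1]\supseteq K_{|B_1|}$ already contains $K_{p,p}$, so I may assume $q\ge 1$. Next I would colour each ordered pair $u<v$ of $B_1$ by the \emph{local type} of $P_{uv}$, namely the isomorphism type (with labelled endpoints and labelled path-order) of $G[\{u,v\}\cup I_{uv}]$, where $I_{uv}$ is the set of internal vertices; there are boundedly many types, so Ramsey produces a large $B_2\subseteq B_1$ with a single local type $\tau$. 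Consider the distance $\ell$ between the two endpoints inside $\tau$: if $\ell=1$ then all pairs of $B_2$ are adjacent in $G$ and $K_{p,p}$ follows again; otherwise a shortest endpoint-to-endpoint path in $\tau$ is chordless, and its image $Q_{uv}$ in $G$ is an \emph{induced} proper path of uniform length $\ell\le p+1$ for every pair. At this stage the branch vertices of $B_2$ are pairwise non-adjacent and joined by pairwise internally disjoint induced proper paths.

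The remaining, and hardest, step is to remove the \emph{inter-path chords}: edges of $G$ lying (i) between two vertex-disjoint paths, (ii) between two paths sharing a branch vertex, or (iii) between a branch vertex and the interior of a non-incident path. I would homogenize these as well, colouring each $4$-subset of $B_2$ by the pattern of all chords among the (at most six) paths it spans; by Ramsey's theorem for $4$-uniform hypergraphs there is a large $B_3\subseteq B_2$ in which every $4$-subset exhibits the same pattern. If that pattern contains \emph{any} chord, I would exhibit a $K_{p,p}$-subgraph: a chord of type (i) yields, after choosing a matching whose edge-pairs realise the relevant order-pairing, arbitrarily many pairwise vertex-disjoint paths any two of which are joined by an edge, so Lemma~\ref{lem:clique} applies; a chord of type (ii) gives the same after deleting the shared branch vertex from a large star of paths; and a chord of type (iii) gives, by an iterated-pigeonhole argument in the spirit of the proof of Lemma~\ref{lem:clique}, a set of branch vertices and a set of interior vertices that are completely joined. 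If instead the homogeneous pattern has no chord at all, then the union of all paths on $B_3$ induces exactly a uniform proper $(\le p)$-subdivision of $K_{|B_3|}$, and choosing two disjoint $r$-subsets $L,R\subseteq B_3$ and keeping only the $L$--$R$ paths yields the required induced proper $(\le p)$-subdivision of $K_{r,r}$.

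I expect the main obstacle to be exactly this third step: keeping the bookkeeping of the three chord types straight and, in each ``dirty'' case, selecting the branch vertices in the correct order-ranges so that the homogeneous pattern genuinely produces a family of disjoint small sets that are \emph{pairwise} completely joined, which is precisely the hypothesis that Lemma~\ref{lem:clique} requires. The nested Ramsey and hypergraph-Ramsey applications inflate $m(r,p)$ enormously, but since every quantity is a finite function of $r$ and $p$ this is harmless for the existence statement.
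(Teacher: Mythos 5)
Your outline is essentially sound, but it takes a genuinely different route from the paper, and it is worth spelling out the difference. The paper never homogenizes the whole configuration: after one application of graph Ramsey to make a set $A$ of $d$ branch vertices independent (or else find $K_{p,p}$), it aims directly at the \emph{bipartite} target. It splits $A$ into a left part $B$ and a right part $C$, and then runs an iterative cleaning procedure: for each left vertex $b_i$ in turn it forms the auxiliary graph on the current right set whose edges record conflicts between $P_{b_i,v_1}-b_i$ and $P_{b_i,v_2}-b_i$, and graph Ramsey plus Lemma~\ref{lem:clique} either produces $K_{p,p}$ or shrinks the right set to an independent set; a final graph-Ramsey step on $B$, applied to the sets $S(u)$ of all internal vertices of the star at $u$, kills the remaining cross-chords. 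Because the target is $K_{r,r}$, only two kinds of path-pairs ever need to be controlled (sharing a left endpoint or not), so ordinary graph Ramsey suffices throughout. Your version instead homogenizes all chord types at once via Ramsey for $4$-uniform hypergraphs, and in the chord-free case you obtain an induced proper subdivision of a complete graph $K_{|B_3|}$ with branch vertices among those of the original $K_m$ --- which is precisely the statement the paper leaves open as Problem~1. That should give you pause: either you have proved something strictly stronger than Theorem~\ref{thm:bigclique}, or a case of your chord analysis leaks. The places that need the most care are exactly the ones you flag as loose: (a) the ``shared branch vertex in the middle'' configuration and all branch-vertex-to-interior configurations give only a \emph{bipartite} completely-joined family, so Lemma~\ref{lem:clique} does not apply as stated and you must re-run the second half of its proof (the iterated pigeonhole) rather than cite it; (b) for each of the three order-pairings of two disjoint paths (separated, crossing, nested) you must exhibit a family of $C(p+2,p)$ pairs every two of which realize that \emph{same} pairing, which works ($a_1<b_1<a_2<b_2<\cdots$, $a_1<\cdots<a_C<b_1<\cdots<b_C$, and $a_1<\cdots<a_C<b_C<\cdots<b_1$ respectively) but is not automatic; and (c) reading off triple- and pair-patterns consistently from the homogeneous $4$-set colouring. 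If all of these survive scrutiny, your argument is not just an alternative proof but an improvement; if you only want Theorem~\ref{thm:bigclique} itself, the paper's bipartite iterative cleaning is considerably lighter, avoiding hypergraph Ramsey entirely.
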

 
\begin{proof}
For integers $a,b>0$ and $i \geq 0$, we recursively define $R^i(a,b)$ as follows: $R^0(a,b)=R(a,b)$ and for $i>0$, $R^i(a,b)=R(R^{i-1}(a,b),b)$. 
With $C(a,b)$ coming from Lemma~\ref{lem:clique}, we also define 
\begin{itemize}
\item[] $q=q(r,p)=R(r,C(rp,p))$,
\item[] $c=c(r,p)=R^{q}(r,C(p,p))$, 
\item[] $d=d(r,p)=c(r,p)+q(r,p)$, 
\item[] $m=m(r,p)=R(d,2p)$.
\end{itemize}
Suppose that $G$ contains a $(\leq p)$-subdivision of $K_m$ as a subgraph. For any two vertices $u,v$ of the $K_m$ we denote by $P_{u,v}$ the path obtained by subdividing the edge $uv$ 
and assume without loss of generality that this path is induced, i.e. chordless. 
By definition of $m$, either $2p$ vertices of the $K_m$ induce a clique, in which case $G$ contains a $K_{p,p}$-subgraph,
or $d$ vertices of the $K_m$ form an independent set $A$ in $G$, in which case $G$ contains a proper $(\leq p)$-subdivision of $K_d$ as a subgraph.

We partition the vertices of $A$ arbitrarily into two subsets $B$ of size $q=q(r,p)$ and $C$ of size $c=c(r,p)$. 
Also, let $b_1, \ldots, b_{q}$ be an arbitrary ordering of the vertices in $B$ and let $B_i=\{b_1, \ldots, b_i\}$, $B_0=\emptyset$.

\begin{claim}
If $G$ does not contain a $K_{p,p}$-subgraph, then there are subsets $C_{0}, \ldots, C_{q}$
such that for each $i$
\begin{itemize}
\item[(a)] $C_i$ is of size at least $R^{q-i}(r,C(p,p))$,
\item[(b)] for each vertex $u \in B_i$ and every two distinct vertices
$v_1,v_2 \in C_i$ there are no edges between $P_{u,v_1} - u$ and $P_{u,v_2} - u$. 
\end{itemize} 
\end{claim}

\begin{proof}
We prove the claim by induction on $i$. For $i=0$, the claim follows trivially with $C_0=C$. 
Now let  $i>0$. Consider the graph with vertex set $C_{i-1}$ in which two vertices $v_1,v_2$ are adjacent 
if and only if there is an edge between $P_{u_i,v_1} - u_i$ and $P_{u_i,v_2} - u_i$. 
Then by the inductive assumption, this graph has  either a clique of size $C(p,p)$, in which case $G$ contains a $K_{p,p}$-subgraph by Lemma~\ref{lem:clique},
or an independent set $C_i$ of size $R^{q-i}(r,C(p,p))$ and the claim follows. 
\end{proof}

Assume that $G$ does not contain a $K_{p,p}$-subgraph. According to the above claim, there is a set $C_q$ satisfying (a) and (b). 
Let $V$ be an arbitrary subset of $C_{q}$ of size $r$ and let $U=B_{q}$. 
For each $u \in U$, let $S(u)$ be the set consisting of all the internal vertices of the $P_{u,v}$ paths over all $v \in V$.

Consider the graph with vertex set $U$ in which two vertices $u_1,u_2$ are adjacent 
if and only if there is an edge between $S(u_1)$ and $S(u_2)$. 
Since $|U|=R(r,C(rp,p))$, this graph either has a clique of size $C(rp,p)$, in which case $G$ contains a $K_{p,p}$-subgraph by Lemma~\ref{lem:clique},
or an independent set $W$ of size $r$.
In the latter case, $W$ and $V$ together with the vertices of the paths $P_{u,v}$ 
for all $u\in W$ and $v\in V$ induce a proper $(\leq p)$-subdivision of $K_{r,r}$.
\end{proof}

Now we turn to graphs without tripods. 
For ease of notation, we denote a tripod  $S_{p,p,p}$ simply by $S_p$. Similarly, we denote $T_{p,p,p}$ by $T_p$.

\begin{lemma}\label{lem:block}
For every positive integers $p$, there is a positive integer $b=b(p)$ such that all $S_p$-free graphs with no $K_{p,p}$-subgraph
have block number at most $b$.
\end{lemma}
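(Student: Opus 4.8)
The plan is to prove the contrapositive: assuming $G$ has no $K_{p,p}$-subgraph yet contains a $b$-block $B$ with $b$ enormous, I would produce an induced copy of $S_p$. Two elementary consequences of the hypotheses drive everything. First, $K_{2p}$ contains $K_{p,p}$, so the clique number of $G$ is less than $2p$; by Ramsey's theorem a block of size $b\ge R(2p,t)$ then contains an independent set of $t$ vertices, with $t$ as large as desired. Second, no two vertices of $B$ can be separated by fewer than $b$ vertices, and the neighbourhood of a vertex is such a separator, so every vertex of $B$ has degree at least $b$. Thus the block is simultaneously highly connected and of high minimum degree while staying sparse (no $K_{p,p}$): exactly the setting in which tripods are unavoidable.

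The next step is to convert this connectivity into a branching skeleton, namely a subdivision of a large clique $K_m$ appearing in $G$ as a subgraph with each branch-path chordless. I would take $m$ branch vertices from the independent high-degree set above and link them pairwise by internally disjoint paths: the $b$-connectivity between block vertices supplies, through Menger's theorem and a standard linkage argument, $\binom{m}{2}$ internally disjoint connecting paths once $b$ is large compared with $m$, and replacing each path by a shortest one inside $G$ makes it chordless. The number $m$ can be pushed as high as needed by enlarging $b$.

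From the skeleton I would extract the tripod by a dichotomy on path lengths. If some branch vertex $x$ is incident with many (say at least $R(3,C(p+1,p))$, with $C$ from Lemma~\ref{lem:clique}) subdivided edges of length at least $p$, then I consider the auxiliary graph on these long legs in which two legs are adjacent when $G$ has an edge between them; an independent set of size $3$ in this graph yields, after truncating the three legs to length $p$, an induced $S_p$, while a large clique in it would, via Lemma~\ref{lem:clique}, force a $K_{p,p}$, so such an $x$ finishes the proof. Otherwise every branch vertex carries only boundedly many long edges, so after discarding them a large sub-skeleton remains in which all paths have length at most $p$; this is a $(\le p)$-subdivision of a large $K_{m'}$, and Theorem~\ref{thm:bigclique} returns either a $K_{p,p}$ (contradiction) or an induced proper $(\le p)$-subdivision of a huge $K_{r,r}$. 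In the latter I route three legs out of a single branch vertex of the biclique through pairwise disjoint sets of indices on the opposite side; because the subdivision is induced and the index sets are disjoint, the three legs are chordless and pairwise non-adjacent, and their length grows with $r$, so for $r$ large enough they contain an induced $S_p$.

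The main obstacle is the skeleton step together with maintaining inducedness throughout. Passing from the purely connectivity-theoretic information carried by a block to a clique subdivision requires care, since high local connectivity between the pairs of $B$ is weaker than global linkedness; and a subdivision sitting in $G$ merely as a subgraph may carry chords that destroy an intended tripod. The role of Theorem~\ref{thm:bigclique} is precisely to resolve the short-path regime by handing back an \emph{induced} subdivided biclique, while Lemma~\ref{lem:clique} is the tool that converts an unwanted abundance of chords into a forbidden $K_{p,p}$. Once these qualitative points are in place, fixing $b$ as a function of $p$ through the chain $b\to m\to m'\to r\to p$ is routine bookkeeping.
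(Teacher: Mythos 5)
Your overall strategy is the same as the paper's (extract a subdivision of a large $K_m$ from the block, then use the long-leg/short-leg dichotomy together with Lemma~\ref{lem:clique} and Theorem~\ref{thm:bigclique} to land on either $K_{p,p}$ or an induced $S_p$), but you have inverted the order of two steps, and in the order you chose the skeleton step does not go through. You propose to first build a $K_m$-subdivision with internally disjoint, chordless branch paths using only ``Menger's theorem and a standard linkage argument,'' and only afterwards control the lengths of the subdivided edges. The problem is that a $b$-block gives you high connectivity only \emph{between pairs of block vertices}, not global $b$-connectedness, so you cannot invoke a linkage theorem; and the natural greedy construction (pick one of the $\ge b$ internally disjoint Menger paths for each pair, avoiding earlier choices) fails without an a priori bound on path lengths: the $i-1$ previously chosen paths kill at most one candidate per internal vertex, but if those paths can be arbitrarily long, no fixed $b$ suffices. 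You flag this as ``the main obstacle'' and then declare it routine; it is exactly the non-routine point.

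The paper resolves it by running your length dichotomy \emph{inside each Menger path system first}: for a non-adjacent pair $x,y$ in the block, if $R(3,C(p,p))$ of the internally disjoint chordless $x$--$y$ paths have length more than $p$, then the initial $p$-vertex segments of these paths give either three pairwise non-adjacent segments (an induced $S_p$ centred at $x$) or $C(p,p)$ pairwise adjacent ones (a $K_{p,p}$ via Lemma~\ref{lem:clique}). Hence all but a bounded number of the Menger paths have length at most $p$, and with $b=\binom{m}{2}p+(m-2)+R(3,C(p,p))$ the greedy selection of internally disjoint short paths succeeds, because each chosen path contributes at most $p$ internal vertices to the bookkeeping. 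After that, Theorem~\ref{thm:bigclique} is applied to the resulting $(\le p)$-subdivision of $K_m$ exactly as you do. So the repair is not a new idea but a reordering: apply your own dichotomy to the Menger fans before, not after, assembling the skeleton. The remaining parts of your argument (discarding long legs, extracting an induced $S_p$ from an induced proper $(\le p)$-subdivision of a large biclique by routing three legs through disjoint sets of branch vertices) are sound and match the paper.
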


\begin{proof}
We denote by $C=C(p,p)$ the constant from Lemma~\ref{lem:clique} and $m=m(p,p)$ the constant from Theorem~\ref{thm:bigclique}.
Finally, let $b={\binom m 2}p+(m-2)+R(3,C)$. 

\begin{claim}\label{claim:1}
If $G$ contains a $b$-block, then it contains a $(\le p)$-subdivision of $K_m$ as a subgraph.  
\end{claim}
\begin{proof}
Let $B$ be a $b$-block in $G$ and consider two non-adjacent vertices $x,y\in B$. Since $x$ cannot be separated from $y$ 
by deleting fewer than $b$ vertices, there must exist a collection ${\cal P}(x,y)$ of at least $b$ internally disjoint paths connecting $x$ to $y$ by Menger's Theorem.
Without loss of generality, we assume that each path in ${\cal P}(x,y)$ is chordless. 

Assume $c:=R(3,C)$ of the paths in ${\cal P}(x,y)$ have length more than $p$. Then we may consider the first $p$ vertices (different from $x$) in each of these long paths, 
creating in this way a collection of $R(3,C)$ pairwise disjoint subsets, each of size $p$. Then either three of the subsets have no edges between them, 
in which case together with $x$ they induce an $S_p$,
or $C$ of the subsets have an edge between any two of them, in which case a $K_{p,p}$-subgraph, arises by Lemma~\ref{lem:clique}.  
A contradiction in both cases shows that fewer than $c$ paths in ${\cal P}(x,y)$ have length more than $p$.

For any two non-adjacent vertices $x,y\in B$, the collection ${\cal P}(x,y)$ contains a sub-collection ${\cal P}'(x,y)$ of at least $b-(m-2)-c\ge {\binom m 2}p$ paths of length at most $p$  with no internal vertices in $B$.  
Therefore, for each pair $x,y$ of non-adjacent vertices in $B$ one can choose a path from ${\cal P}'(x,y)$ so that the chosen paths intersect only at the endpoints, i.e. they create the desired subdivision of $K_m$.
This can be easily seen by induction on the number of pairs: if the paths have been chosen for $i-1<{\binom m 2}p$ pairs,
then for the $i$-th pair $x,y\in B$ the set ${\cal P}'(x,y)$ contains at most $(i-1)p$ paths sharing internal vertices with the previously chosen paths, and hence ${\cal P}'(x,y)$ contains a path   
that intersects the previously chosen paths only at the endpoints. 
\end{proof}

According to this claim and Theorem~\ref{thm:bigclique}, if  $G$ contains a $b$-block, then it contains either $K_{p,p}$ as a subgraph, which is impossible, or 
a proper $(\leq p)$-subdivision of $K_{p,p}$ as an induced subgraph. It is not difficult to see that in the latter case $G$ contains $S_p$ as an induced subgraph.
This final contradiction shows that the block number of $G$ is bounded by $b$. 
\end{proof}

\begin{lemma}\label{lem:final}
For every positive integer $p$, there is a positive integer $c=c(p)$ such that all $(S_p,T_p)$-free graphs with no $K_{p,p}$-subgraph
have tree-width at most $c$. 
\end{lemma}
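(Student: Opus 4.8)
The plan is to combine the block-number bound from Lemma~\ref{lem:block} with the structural decomposition provided by Theorem~\ref{thm:block}, and then bound the tree-width by controlling the size of each torso. First I would set $b=b(p)$ from Lemma~\ref{lem:block}, so that every $(S_p,T_p)$-free graph $G$ with no $K_{p,p}$-subgraph has block number at most $b$; in particular $G$ has no $(b+1)$-block. Applying Theorem~\ref{thm:block} with $k=b+1$ (increasing $b$ if necessary so that $k\ge 3$), I obtain a tight tree-decomposition $(T,\mathcal V)$ in which every torso has fewer than $k$ vertices of degree at least $D:=2(k-1)(k-2)$. The strategy is then to show that each bag $V_x$, equivalently each torso, has bounded size; since the width is the maximum bag size minus one, this yields the desired bound $c=c(p)$ on the tree-width.

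The heart of the argument is to bound the number of \emph{low-degree} vertices in a torso, i.e. those of degree less than $D$. Let $H$ be the torso at a node $x$. By Theorem~\ref{thm:block} there are fewer than $k$ high-degree vertices, so it suffices to bound the low-degree part $L\subseteq V(H)$. Here I would exploit tightness: for each tree-edge $xy$, the separator $Z_{xy}$ was turned into a clique, but tightness guarantees that the two endpoints of any would-be clique edge are genuinely joined in $G[U_x]$ and $G[U_y]$ by paths avoiding the rest of the separator. The plan is to translate a large low-degree set in the torso into a large substructure inside $G$ itself that forbidden subgraphs rule out. Concretely, since each low-degree vertex of $H$ has bounded degree in $H$, a large set $L$ of pairwise non-adjacent low-degree vertices (extracted by Ramsey, since a large clique among them would again produce $K_{p,p}$) gives many vertices of small neighborhood, from which one can grow many internally disjoint paths in $G$ meeting at a common vertex or forming the rib-pattern of a tripod $S_p$ or its line graph $T_p$.

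The main obstacle, and the step I expect to require the most care, is precisely this translation between the torso $H$ (where separators have been artificially completed into cliques) and the original graph $G$ (where the forbidden induced subgraphs live). The clique-fill operation can create edges and short cuts that are not present in $G$, so a clean $S_p$ or $T_p$ appearing in $H$ need not appear as an \emph{induced} subgraph of $G$. To handle this I would use tightness to replace each artificial clique edge of $H$ by an honest chordless path in $G$ through the appropriate branch $U_x$ or $U_y$, argue that these paths can be chosen internally disjoint (via a counting/Menger argument as in the proof of Claim~\ref{claim:1}), and then apply Lemma~\ref{lem:clique} to convert any unwanted adjacencies among the paths into a $K_{p,p}$-subgraph, which is excluded by hypothesis. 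The absence of both $K_{p,p}$ and the tripods $S_p,T_p$ forces the low-degree set $L$ to be small, hence the torso, hence every bag, has size bounded by a function of $p$ alone, completing the proof.
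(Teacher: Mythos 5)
Your setup is the paper's: Lemma~\ref{lem:block} bounds the block number, Theorem~\ref{thm:block} yields a tight tree-decomposition whose torsos have fewer than $b$ high-degree vertices, and tightness lets you translate artificial clique edges of a torso back into induced paths of $G$. But the core of your plan --- bounding the \emph{size} of each torso by showing that a large set of low-degree torso vertices forces an $S_p$, a $T_p$ or a $K_{p,p}$ --- cannot work. A torso can be arbitrarily large while being $(S_p,T_p)$-free, of bounded degree, and with no $K_{p,p}$-subgraph: a long induced path or a long chordless cycle already has this form, and Theorem~\ref{thm:block} places no upper bound on torso size, only on the number of high-degree vertices in it. Consequently there is no way to ``grow many internally disjoint paths meeting at a common vertex'' out of many low-degree vertices; nothing forces such a configuration, and the bags need not be small. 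The conclusion you want (bounded tree-width) simply does not require, and cannot be obtained via, bounded bag size at this stage.

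The missing ingredient is Theorem~\ref{thm:degree}, which your proposal never invokes. The paper's route is: (i) show each torso $H$ is itself $(S_p,T_p)$-free --- here tightness is used exactly as you suggest, replacing each fake clique edge by a chordless path in the corresponding branch, and noting that an induced $S_p$ or $T_p$ meets each separator clique in at most two vertices, so the replacement yields an induced subdivision of $S_p$ or $T_p$ in $G$, a contradiction; (ii) delete the fewer than $b$ high-degree vertices of $H$ to obtain an $(S_p,T_p)$-free graph of maximum degree less than $d=2(b-1)(b-2)$, whose tree-width is bounded by Theorem~\ref{thm:degree}; hence each torso has tree-width at most some $c(p)$; (iii) glue the resulting tree decompositions of the torsos (each of which can be arranged to have a bag containing $V_x\cap V_y$ for every tree-edge $xy$) into a tree decomposition of $G$ of width at most $c$. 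Steps (ii) and (iii) --- bounding the torso's tree-width rather than its order, and the final combination step --- are absent from your proposal and are where the lemma is actually proved.
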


\begin{proof}
Let $b=b(p)$ be the constant from Lemma~\ref{lem:block}, $d:=2(b-1)(b-2)$ and $G$ an $(S_p,T_p)$-free graph with no $K_{p,p}$-subgraph.  
By Lemma~\ref{lem:block}, the block number of $G$ is at most $b$, and by Theorem~\ref{thm:block}, there is a tight tree-decomposition $(T,\cal V)$
of $G$ such that every torso has fewer than $b$ vertices of degree at least $d$. Let $H$ be a torso at node $x$ of $T$.

Assume $H$ contains an induced copy of $S_p$. Then there is an edge $uv$ in this copy that does not belong to $G$ and hence
there is a bag $V_y$ with $u,v\in V_x\cap V_y$.  
Since the tree-decomposition is tight, $u$ and $v$ are connected in $G$ by a (chordless) path all of whose internal vertices belong to $U_y$ (using the notation of Section~\ref{sec:tree-width}).
We also observe that an induced $S_p$ cannot contain more than two vertices in $V_x\cap V_y$, since this is a clique. Therefore, 
by replacing every edge of $S_p$ that does not belong to $G$ with a path we obtain an induced subdivision of $S_p$ in $G$, which is impossible.
This discussion shows that $H$ is $S_p$-free. In a similar way, we can show that $H$ is $T_p$-free, because an induced copy of $T_p$ in $H$ 
gives rise either to an induced subdivision of $S_p$ in $G$ (if $uv$ is an edge of the central triangle in $T_p$) or to an induced subdivision of $T_p$
(if every edge of the central triangle in $T_p$ is also an edge in $G$).

By removing at most $b$ vertices from $H$, we obtain an $(S_p,T_p)$-free graph of vertex degree at most $d$. According to Theorem~\ref{thm:degree}
the tree-width of this graph is bounded by a constant (depending on $p$ and $d$). Since $b$ and $d$ depend only on $p$, there is constant $c=c(p)$
bounding the tree-width of each torso. 

Any tree decomposition of the torso at node $x$ has a bag which contains $V_x \cap V_y$ for each neighbour $y$ of $x$ in $T$.
Therefore, a leaf bag $V_x \cap V_y$ can be created for each $y$.
Consequently, the tree decompositions of width at most $c$ for each torso  can be combined into a tree decomposition of width at most $c$
for the whole graph $G$.

\end{proof}

\begin{theorem}
Let $X$ be a hereditary class defined by a finite set $F$ of forbidden induced subgraphs. 
There is a constant bounding the tree-width of graphs in $X$ if and only if $F$ includes a complete graph, a complete bipartite graph,
a tripod and the line graph of a tripod. 
\end{theorem}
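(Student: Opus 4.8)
The plan is to prove the two implications separately, treating the analytic core as already established: Lemma~\ref{lem:final} (with Lemma~\ref{lem:block} and Theorem~\ref{thm:bigclique}) for the positive side, and Lemma~\ref{lem:boundary} together with the minimality of complete and complete bipartite graphs for the negative side. The engine of the \emph{only if} direction is that all four critical classes are hereditary. This is immediate for complete graphs and, once one admits edgeless graphs as degenerate complete bipartite graphs, for complete bipartite graphs as well; for $\mathcal{S}$ and $\mathcal{T}$ it holds once a path is regarded as a degenerate tripod (respectively line graph of a tripod), so that deleting a vertex from an $S_{i,j,k}$ or a $T_{i,j,k}$ again yields a disjoint union of members of the class. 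I would record this heredity first, since it is precisely what lets one pass from ``$F$ misses $\mathcal{C}$'' to ``$\mathcal{C}\subseteq X$''.

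For the \emph{only if} direction I would argue by contraposition against each critical class $\mathcal{C}$ in turn. Suppose $F$ contains no member of $\mathcal{C}$. Since $\mathcal{C}$ is hereditary, any $H\in F$ occurring as an induced subgraph of some member of $\mathcal{C}$ would itself lie in $\mathcal{C}$, which is impossible; hence no member of $\mathcal{C}$ contains a forbidden graph, so $\mathcal{C}\subseteq X$. Finally $\mathcal{C}\subseteq X$ forces unbounded tree-width: for complete and complete bipartite graphs this is immediate because $K_n$ and $K_{n,n}$ have tree-width tending to infinity, and for $\mathcal{C}=\mathcal{S}$ or $\mathcal{C}=\mathcal{T}$ it is exactly Lemma~\ref{lem:boundary}. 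Taking the contrapositive of all four statements shows that bounded tree-width forces $F$ to contain a complete graph, a complete bipartite graph, a tripod, and a line graph of a tripod.

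For the \emph{if} direction I would collapse everything onto Lemma~\ref{lem:final} by a single parameter choice. Suppose $F$ contains $K_{p_0}$, a complete bipartite graph (which, by passing to its larger part, we may take to force $K_{s,s}$-freeness), a tripod $S_{i,j,k}$, and a line graph $T_{i',j',k'}$ of a tripod. Setting $t=R(s,p_0)$, the observation of Section~\ref{sec:pre} gives that every $G\in X$, being $K_{p_0}$-free and $K_{s,s}$-free, has no $K_{t,t}$-subgraph. Now put $p=\max\{i,j,k,i',j',k',t\}$. Then $S_{i,j,k}$ is an induced subgraph of $S_p=S_{p,p,p}$ and $T_{i',j',k'}$ is an induced subgraph of $T_p=T_{p,p,p}$, so every $G\in X$ is $S_p$-free and $T_p$-free; and the absence of a $K_{t,t}$-subgraph with $t\le p$ rules out any $K_{p,p}$-subgraph. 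Thus every $G\in X$ is an $(S_p,T_p)$-free graph with no $K_{p,p}$-subgraph, and Lemma~\ref{lem:final} bounds its tree-width by $c(p)$, a constant depending only on $F$.

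I do not expect a serious obstacle in the statement itself, because the genuinely hard step --- extending the bounded-degree bound of Theorem~\ref{thm:degree} to bounded biclique number, via the block-number estimate of Lemma~\ref{lem:block} and the subdivision-extraction of Theorem~\ref{thm:bigclique} --- is already discharged. The points that need care are organizational: verifying the heredity of the four critical classes, including the degenerate members (so that the ``only if'' implication does not fail, as it would under a reading of ``complete bipartite'' excluding edgeless graphs), and selecting $p$ large enough to simultaneously absorb every leg length of the forbidden tripod and line graph of a tripod together with the Ramsey threshold $t$. Once these are in place, the theorem is a clean assembly of the preceding results.
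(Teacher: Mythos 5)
Your \emph{only if} direction is sound and matches the paper: heredity of the four critical classes plus Lemma~\ref{lem:boundary} and the minimality of complete and complete bipartite graphs. The problem is in the \emph{if} direction, where you reduce everything to Lemma~\ref{lem:final} by taking the tripod in $F$ to be a single connected $S_{i,j,k}$. A tripod is a \emph{forest} each of whose components has at most three leaves, i.e.\ an arbitrary member of $\mathcal{S}$, possibly with many components --- and your own heredity argument in the first part depends on $\mathcal{S}$ and $\mathcal{T}$ containing such disconnected graphs, so you cannot consistently restrict to the connected case here. If $F$ contains, say, the disjoint union $kS_{1,1,1}$ with $k>1$ and no connected tripod, then graphs in $X$ are only guaranteed to be $kS_p$-free; they may well contain induced copies of $S_p$ itself, and Lemma~\ref{lem:final} does not apply. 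Your choice $p=\max\{i,j,k,i',j',k',t\}$ silently assumes connectivity and proves only a special case of the theorem.

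The paper closes exactly this gap by an induction on $k+\ell$, the total number of components of the forbidden tripod and the forbidden line graph of a tripod, with your argument serving as the base case $k=\ell=1$. For the inductive step (say $k>1$): if $G\in X$ is $S_p$-free one applies the induction hypothesis directly; otherwise the number of pairwise vertex-disjoint induced copies of $S_p$ in $G$ is at most $R(k,C(3p+1,p))$, since by Lemma~\ref{lem:clique} a larger collection would yield either $k$ copies with no edges between them (an induced $kS_p$, hence a forbidden $kS_p\supseteq$ the tripod in $F$) or $C(3p+1,p)$ pairwise adjacent copies and hence a $K_{p,p}$-subgraph. Deleting the at most $(3p+1)R(k,C(3p+1,p))$ vertices of a maximal such collection leaves an $S_p$-free graph whose tree-width is bounded by induction, and deleting a bounded number of vertices perturbs tree-width by at most that number. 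You need to add this component-counting induction (and its mirror for $\ell>1$) for the proof to cover the stated theorem.
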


\begin{proof}
Since $X$ is a finitely defined class, the inclusion in $F$ graphs from the four listed classes is a necessary condition for boundedness of tree-with (Lemma~\ref{lem:boundary}).
To show that this condition is sufficient, we denote by $k$ the number of connected components in a tripod in $F$ and by $\ell$ the number of connected 
components in the line graph of a tripod in $F$ and prove the theorem by induction on $k+\ell$. 

Since a complete graph and a complete bipartite graph are forbidden for $X$, graphs in $X$ contain no $K_{p,p}$-subgraph for some constant $p$. 
We choose $p$ so that every component of the tripod in $F$ is contained in $S_p$ and every component of the line graph of a tripod in $F$ is contained in $T_p$. 
In other words, we choose $p$ so that all graphs in $X$ are $(kS_p,\ell T_p)$-free with no $K_{p,p}$-subgraph. 

For $k=\ell=1$, the result follows from Lemma~\ref{lem:final}. For $k>1$, we apply the following argument. If a graph $G$ in $X$
does not contain an induced copy of $S_p$, then a bound on the tree-width of $G$ follows by induction. If $G$ contains an induced copy of $S_p$, 
then the number of vertex-disjoint such copies is bounded by $R(k,C(3p+1,p))$, because otherwise, by Lemma~\ref{lem:clique}, we obtain either $k$ copies with no edges between any two of them,
i.e. an induced  $kS_p$, or a collection of $C(3p+1,p)$ copies with an edge between any two of them, in which case a $K_{p,p}$-subgraph arises. 
Therefore, by deleting at most $(3p+1)R(k,C(3p+1,p))$ vertices from $G$ we obtain a graph, which is $S_p$-free. The tree-width of this graph is bounded by induction.
Since the number of the deleted vertices is bounded by a constant, there is a constant bounding the tree-width of $G$. 
Similar arguments apply to the case $\ell>1$.  
\end{proof}

\section{Discussion}
\label{sec:discussion}

We conclude the paper with a number of remarks and open problems. 
First, we observe that the main result of the paper, which establishes a dichotomy for hereditary classes defined by finitely many induced subgraphs,
is also suggestive towards a dichotomy in the universe of all hereditary classes. Indeed, each of the two boundary classes, $\cal S$ and $\cal T$, 
is the limit of an infinite sequence of classes of unbounded tree-width. Therefore, every class of bounded tree-width must exclude a graph from 
each class of the sequence converging to $\cal S$ and from each class of the sequence converging to $\cal T$. This is the case, for 
instance, for classes of bounded chordality, i.e. classes excluding large induced cycles $C_k,C_{k+1},\ldots$ for a constant $k$.  
The fact that bounded chordality together with bounded biclique number imply bounded tree-width was shown in \cite{absence}
by means of a probabilistic argument from \cite{KO}. Our Theorem~\ref{thm:bigclique} allows to avoid this argument, 
providing a common approach to graphs excluding induced tripods and graphs excluding large induced cycles, because an induced $(\leq p)$-subdivision 
of $K_{r,r}$ contains both an induced tripod and a large induced cycle for an appropriate value of $r$.

An interesting open question related to Theorem~\ref{thm:bigclique} is whether instead of an induced subdivision of a biclique we can
seek for an induced subdivision of a clique. More formally:

\begin{problem}
Is it true that for all positive integers $r$ and $p$, there is a positive integer $m=m(r,p)$
such that every graph $G$ containing a $(\le p)$-subdivision of $K_m$ as a subgraph 
contains either $K_{p,p}$ as a subgraph or a proper $(\leq p)$-subdivision of $K_{r}$ as an induced subgraph,
whose branch vertices are also the branch vertices of the $K_m$? 
\end{problem}

Finally, we note that a similar question with respect to Lemma~\ref{lem:clique}, i.e. whether a large biclique in this lemma can be replaced with a large clique,
has a negative answer. Indeed, let $M$ be a matching of size $m$ in a $K_{m,m}$. Clearly, every two elements (edges) of $M$ are connected by an edge, 
but the graph does not contain a large clique.

\end{document}